
\documentclass{amsart}

\renewcommand{\epsilon}{\varepsilon}

\newcommand{\carea}{c_{\mathit{area}}}

%
\newcommand{\Vol}{\operatorname{Vol}}
\newcommand{\CP}{{\mathbb C}\!\operatorname{P}^1}

%
\newcommand{\C}{{\mathbb C}}

%
\newcommand{\cH}{{\mathcal H}}
\newcommand{\cM}{{\mathcal M}}
\newcommand{\cQ}{{\mathcal Q}}

%
\newtheorem*{NNTheorem}{Theorem}
\newtheorem{Conjecture}{Conjecture}
\newtheorem{Corollary}{Corollary}

\theoremstyle{remark}
\newtheorem{Remark}{Remark}


\begin{document}

\author[A.~Aggarwal]{Amol Aggarwal}
\address{
Harvard University,
Department of Mathematics,
1 Oxford Street,
Cambridge, MA 02138,
USA
}
\email{agg\_a@math.harvard.edu}
\thanks{The research of the first author is supported by the NSF Graduate Research Fellowship under grant number DGE1144152 and NSF grant DMS-1664619}

\author[V.~Delecroix]{Vincent Delecroix}
\address{
LaBRI,
Domaine universitaire,
351 cours de la Lib\'eration, 33405 Talence, FRANCE
}
\email{20100.delecroix@gmail.com}

\author[\'E.~Goujard]{\'Elise Goujard}
\thanks{Research of the second author was partially supported by
PEPS}
\address{
Institut de Math\'ematiques de Bordeaux,
Universit\'e de Bordeaux,
351 cours de la Lib\'eration, 33405 Talence, FRANCE
}
\email{elise.goujard@gmail.com}

\author[P.~G.~Zograf]{Peter~Zograf}
\thanks{The research of Section~\ref{s:Numerical:evidence} was supported by the Russian Science Foundation grant 19-71-30002.
}
\address{
St.~Petersburg Department, Steklov Math. Institute, Fontanka 27,
St. Petersburg 191023, and Chebyshev Laboratory,
St. Petersburg State University, 14th
Line V.O. 29B, St.Petersburg 199178 Russia}
\email{zograf@pdmi.ras.ru}

\author[A.~Zorich]{Anton Zorich}
\address{
Center for Advanced Studies, Skoltech;
Institut de Math\'ematiques de Jussieu --
Paris Rive Gauche,
Case 7012,
8 Place Aur\'elie Nemours,
75205 PARIS Cedex 13, France}
\email{anton.zorich@gmail.com}

\date{December 25, 2019}

\title[Asymptotics of volumes for strata of quadratic differentials]
{Conjectural large genus asymptotics of Masur--Veech volumes
and of area Siegel--Veech constants of strata of quadratic differentials}

\begin{abstract}
We  state  conjectures  on  the asymptotic behavior of the Masur--Veech volumes of strata in the moduli  spaces  of   meromorphic quadratic  differentials  and  on the asymptotics of their  area Siegel--Veech constants  as  the genus  tends to infinity.
\end{abstract}

\maketitle

\section*{Brief history of the subject and state of the art}

The Masur--Veech volumes of strata in the moduli spaces of Abelian differentials and in the moduli spaces of meromorphic quadratic differentials with at most simple poles were introduced in fundamental papers~\cite{Masur} and~\cite{Veech}. These papers proved that the Teichm\"uller geodesic flow is ergodic on each connected component of each stratum with respect to the Masur--Veech measure introduced in these papers, and that the total measure of each stratum is finite.
\smallskip

\noindent
\textbf{Masur--Veech volumes of strata of Abelian differentials.~}
The  first  efficient evaluation of volumes of the strata in the  moduli spaces of Abelian differentials was performed by A.~Eskin and A.~Okounkov~\cite{Eskin:Okounkov} twenty years later. The Masur--Veech volumes of several low-dimensional strata of Abelian differentials were computed just before that and by different methods in~\cite{Zorich:square:tiled}.  The algorithm of A.~Eskin and A.~Okounkov was implemented by A.~Eskin in a rather efficient computer code  which  already  at  this time allowed to compute volumes of all strata of Abelian differentials up to genus 10, and volumes of some strata, like the principal one, up to genus 60 (or more).

Direct computations of volumes of the strata of Abelian differentials in genera accessible at this time combined with numerical experiments with the Lyapunov
exponents providing an approximate value of the Siegel--Veech constant $\carea$ allowed A.~Eskin and A.~Zorich to state conjectures on asymptotic behavior of volumes of the strata in the moduli spaces of Abelian differentials and on the large genus asymptotics of
the associated Siegel--Veech constants. These conjectures were stated at the end of 2003, but published in~\cite{Eskin:Zorich} only after publication of~\cite{Eskin:Kontsevich:Zorich}.
The latter paper proved the relation between the sum of the Lyapunov exponents and the Siegel--Veech constant, and, thus, justified the prior experimental evidence.

The study of the Masur--Veech volumes of the strata of Abelian differentials was tremendously advanced in recent years. D.~Chen, M.~M\"oller and D.~Zagier proved in~\cite{Chen:Moeller:Zagier} the large genus asymptotic formula for the Masur--Veech volume of the principal stratum of Abelian differentials, confirming the conjectural formula from~\cite{Eskin:Zorich} for this particular case. A.~Sauvaget proved in~\cite{Sauvaget:minimal:stratum} the asymptotics of Masur--Veech volume conjectured in~\cite{Eskin:Zorich} for the minimal stratum of Abelian differentials. A.~Aggarwal proved in~\cite{Aggarwal:Volumes} the conjectural volume asymptotics for all strata by combinatorial methods. Finally, D.~Chen, M.~M\"oller, A.~Sauavget and D.~Zagier proved in~\cite{Chen:Moeller:Sauvaget:Zagier} the conjecture in maximal generality for all connected components of all strata of Abelian differentials combining combinatorics, geometry, and intersection theory. A.~Sauavget found in~\cite{Sauvaget:expansion} the asymptotic expansion in inverse powers of $g$ for these volumes. The same authors proved in~\cite{Chen:Moeller:Sauvaget:Zagier} very efficient recursive formula for the Masur--Veech volumes of the strata of Abelian differentials, which allows to find their exact values up to very large genera. Also, A.~Aggarwal in~\cite{Aggarwal:Siegel:Veech} and D.~Chen, M.~M\"oller, A.~Sauavget and D.~Zagier in~\cite{Chen:Moeller:Sauvaget:Zagier} proved all conjectures from~\cite{Eskin:Zorich} on large genus asymptotics of the Siegel--Veech constants. Masur--Veech volumes and Siegel--Veech constants of hyperelliptic connected components were computed by J.~Athreya, A.~Eskin and A,~Zorich in~\cite{AEZ:genus:0}. Due to these series of papers we now have comprehensive information on the Masur--Veech volumes and on Siegel--Veech constants for all strata of Abelian differentials.
\smallskip

\noindent
\textbf{Masur--Veech volumes of strata of quadratic differentials.~}
Our knowledge of the Masur--Veech volumes and of the Siegel--Veech constants for the strata of meromorphic quadratic differentials is still relatively poor. Paper~\cite{AEZ:genus:0} proved exact closed formula
(conjectured by M.~Kontsevich)
for the Masur--Veech volumes and for the Siegel--Veech constants in genus zero. The first reasonable table of volumes of strata of quadratic differentials was computed by E.~Goujard in~\cite{Goujard:volumes}, where she applied the algorithm of Eskin and Okounkov~\cite{Eskin:Okounkov} to compute
volumes of all strata of dimension up to $11$.

In recent paper~\cite{DGZZ:volume} the authors found a formula for the Masur--Veech volumes of the strata with simple zeroes and simple poles through intersection numbers of $\psi$-classes on the Deligne--Mumford compactification $\overline{\cM}_{g,n}$ of the moduli space of complex curves and stated the conjecture on the large genus asymptotics of the
Masur--Veech volume of the principal stratum of holomorphic quadratic differentials.
The recent paper~\cite{Chen:Moeller:Sauvaget} suggested an alternative formula for the Masur--Veech volumes of the same strata through certain very special linear Hodge integrals. The very recent paper of~\cite{Kazarian} provided extremely efficient recursive formula for these Hodge integrals, which allows to compute volumes of strata with simple zeroes and several simple poles in genera about $100$ in split seconds. This computation
corroborates the conjectural formula of the authors for these strata. According to paper~\cite{Chen:Moeller:Sauvaget}, work in progress~\cite{Yang:Zagier:Zhang} also corroborates and
develops this conjecture. Moreover, as it will be explained in Section~\ref{s:Numerical:evidence}, formula of Chen--M\"oller--Sauvaget combined with efficient recursive algorithm of Kazarian, together allow to extend the conjecture to general strata in the moduli space of meromorphic quadratic differentials with at most simple poles.

\section{Conjectural asymptotic formula}

Let  $\boldsymbol{d}=(d_1,\dots,d_n)$  be an unordered partition of a
positive integer number $4g-4$ divisible by $4$
into a sum $|\boldsymbol{d}|=d_1+\dots+d_n=4g-4$, where $d_i\in\{-1,0,1,2,\dots\}$
 for $i=1,\dots,n$. Denote by $\hat\Pi_{4g-4}$
the set of those partitions as above, which satisfy the additional requirement that the number of entries $d_i=-1$ in $\boldsymbol{d}$ is at most $\log(g)$.

\begin{Conjecture}
\label{mc:vol}
For any $\boldsymbol{d}\in\hat\Pi_{4g-4}$ one has
\begin{equation}
\label{eq:asymptotic:formula:for:the:volume}
\Vol\cQ(d_1,\dots,d_n)=\cfrac{4}{\pi}\cdot\prod_{i=1}^n\cfrac{2^{d_i+2}}{d_i+2}
\cdot \big(1+\varepsilon_1(\boldsymbol{d})\big)\,,
\end{equation}
where
$$
\lim_{g\to\infty} \max_{\boldsymbol{d}\in\hat\Pi_{4g-4}} |\varepsilon_1(\boldsymbol{d})| = 0\,.
$$
\end{Conjecture}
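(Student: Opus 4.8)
The plan is to reduce the volume asymptotics to a uniform large-genus estimate for intersection numbers, following the strategy that succeeded for Abelian differentials. First I would express $\Vol\cQ(d_1,\dots,d_n)$ through the linear Hodge integral formula of Chen--M\"oller--Sauvaget~\cite{Chen:Moeller:Sauvaget}, or equivalently through the $\psi$-class formula of~\cite{DGZZ:volume} extended to general strata as described in Section~\ref{s:Numerical:evidence}. In either presentation the volume becomes a finite sum over stable graphs $\Gamma$ of genus $g$ with $n$ legs, in which each summand is a product of local intersection numbers attached to the vertices weighted by an explicit combinatorial factor carried by the edges:
$$\Vol\cQ(\boldsymbol{d})=\sum_{\Gamma} w(\Gamma)\prod_{v\in V(\Gamma)} I_v(\boldsymbol{d})\,.$$

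The second step is to isolate the dominant contribution. I expect the one-vertex graph to dominate, and its intersection number to factorize, in the large-genus limit, into a product of local factors, one per singularity of order $d_i$, times a global constant. The arithmetic of the relevant Witten--Kontsevich and Hodge numbers in the regime where the genus is large is what should produce the local factor $\tfrac{2^{d_i+2}}{d_i+2}$ and the prefactor $\tfrac{4}{\pi}$; at a simple pole $d_i=-1$ this local factor equals $2$, consistent with the contribution of a simple pole.

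The third and decisive step is to establish the large-genus asymptotics of these intersection numbers \emph{uniformly} over $\boldsymbol{d}\in\hat\Pi_{4g-4}$. Here I would adapt the uniform estimates for $\psi$-class intersection numbers proved by Aggarwal~\cite{Aggarwal:Volumes} and refined by subsequent authors, together with Kazarian's recursion~\cite{Kazarian}, to the Hodge integrals of the quadratic setting. Simultaneously one must show that every stable graph carrying at least one edge contributes a factor $o(1)$ relative to the leading term, uniformly in $\boldsymbol{d}$; this amounts to bounding each ratio $I_v(\boldsymbol{d})/I_{\mathrm{top}}(\boldsymbol{d})$ across a degeneration and summing the resulting bounds over the super-exponentially many graphs.

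The hard part will be this uniformity over the exponentially large family $\hat\Pi_{4g-4}$. Per-partition asymptotics are comparatively accessible, but controlling the error $\varepsilon_1(\boldsymbol{d})$ simultaneously for all admissible $\boldsymbol{d}$ --- in particular when many of the $d_i$ are small and the number of simple poles is allowed to grow up to $\log(g)$ --- is exactly the obstacle that made the Abelian case hard. The restriction to at most $\log(g)$ poles is essential, since each additional simple pole degrades the error term, and the program must quantify this degradation precisely enough to confirm that $\log(g)$ poles remain harmless while all remaining singularities are controlled.
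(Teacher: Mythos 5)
You should first be aware that the statement you set out to prove is Conjecture~\ref{mc:vol} of the paper: the authors do not prove it. Their support for it consists of (a) a reverse-engineering heuristic, carried out in Section~\ref{s:linearised:carea}, showing that expression~\eqref{eq:asymptotic:formula:for:the:volume} is (up to the global constant) the unique volume asymptotics which, when inserted into Goujard's exact Siegel--Veech formula~\cite{Goujard:carea} and combined with Conjecture~\ref{conj:config} on dominant configurations, yields $\carea\to\tfrac14$; and (b) numerical evidence: exact values of $\Vol\cQ_{g,n}$ up to $g=250$ obtained from~\cite{Chen:Moeller:Sauvaget} plus the recursion of~\cite{Kazarian}, Goujard's tables in dimension $\le 11$, and the constant $\tfrac{4}{\pi}$ imported from the analysis of~\cite{DGZZ:volume}. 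So your proposal is not an alternative to a proof in the paper --- it is an attempt at something the paper deliberately leaves open, and it must be judged on its own.

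Judged that way, it is a program rather than a proof, and the gaps are concrete. First, your starting point does not exist for most $\boldsymbol{d}\in\hat\Pi_{4g-4}$: the intersection-theoretic formulas of~\cite{Chen:Moeller:Sauvaget} and~\cite{DGZZ:volume} are established only for the principal strata $\cQ(1^{4g-4+n},-1^n)$ (simple zeroes, simple poles); for general strata the analogous formula is itself only conjectural, as the paper states explicitly. A proof valid for all of $\hat\Pi_{4g-4}$ cannot begin from an unproved identity. Second, the two steps that carry all of the content --- (i) that the leading term factorizes into the local factors $\tfrac{2^{d_i+2}}{d_i+2}$ with global constant $\tfrac{4}{\pi}$, and (ii) that every stable graph with an edge contributes $o(1)$ \emph{uniformly} over $\boldsymbol{d}$, with the sum over super-exponentially many graphs still $o(1)$ --- are asserted as expectations ("I expect", "should produce", "I would adapt") with no argument; the Abelian-case estimates of~\cite{Aggarwal:Volumes} do not transfer automatically, since the quadratic-differential graph sums involve different local polynomials and signs, and controlling their cancellations is precisely where the difficulty lies. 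You correctly identify uniformity over $\hat\Pi_{4g-4}$ as the hard point, and your route is the natural one (indeed it is the route by which the principal-stratum case was later settled), but as written nothing beyond the conjecture's restatement has been established.
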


\begin{Remark}
We  use  the normalization of volumes
as in~\cite{AEZ:genus:0}, \cite{Goujard:volumes},
\cite{DGZZ:volume} and~\cite{Chen:Moeller:Sauvaget}.
In   particular,   all   the   zeroes  are  labeled.
\end{Remark}

\begin{Conjecture}
\label{mc:SV}
For  non-hyperelliptic components $\cQ$ of all strata $\cQ(\boldsymbol{d})$
of meromorphic quadratic differentials with at most simple poles, where $\boldsymbol{d}\in\hat\Pi_{4g-4}$ and $g\ge 6$ one has
\begin{equation}
\label{eq:carea:to:1:4}
\carea(\cQ) = \frac{1}{4}
\cdot \big(1+\varepsilon_2(\boldsymbol{d})\big)\,,
\end{equation}
where
$$
\lim_{g\to\infty} \max_{\boldsymbol{d}\in\hat\Pi_{4g-4}} |\varepsilon_2(\boldsymbol{d})| = 0\,.
$$
\end{Conjecture}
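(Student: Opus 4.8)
The statement reduces, through the Siegel--Veech machinery, to the volume asymptotics already recorded in Conjecture~\ref{mc:vol}, and I would organize a proof around that reduction.

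\textbf{Step 1 (Siegel--Veech formula).} I would start from the formula of Goujard (following Eskin--Masur--Zorich) expressing the area Siegel--Veech constant of a stratum of quadratic differentials as a finite sum over admissible configurations $\mathcal{C}$ of homologous saddle connections bounding cylinders,
\[
\carea(\cQ)=\sum_{\mathcal{C}} c(\mathcal{C})\cdot\frac{\Vol\cQ(\mathcal{C})}{\Vol\cQ}\,,
\]
where $\cQ(\mathcal{C})$ is the principal boundary stratum obtained by collapsing the cylinders of $\mathcal{C}$ and $c(\mathcal{C})$ is an explicit combinatorial factor attached to the local surgery. This transfers the whole question onto the large-genus behavior of the volume ratios $\Vol\cQ(\mathcal{C})/\Vol\cQ$.

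\textbf{Step 2 (feeding in the volume asymptotics).} Because the right-hand side of \eqref{eq:asymptotic:formula:for:the:volume} factorizes over the local exponents through the factors $2^{d_i+2}/(d_i+2)$, collapsing a cylinder --- an operation that touches only a bounded number of singularities and lowers the genus in a controlled way --- multiplies the volume by an explicit \emph{local} factor that does not see the untouched part of $\boldsymbol{d}$. Hence for each configuration type the ratio $\Vol\cQ(\mathcal{C})/\Vol\cQ$ should converge to a constant depending only on the surgery, with the remaining partition cancelling between numerator and denominator; in particular, for a collapse keeping the surface connected the two $\tfrac{4}{\pi}$ prefactors cancel and the surviving constant is rational.

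\textbf{Step 3 (dominant term and error).} I would then single out the dominant configuration --- a single non-separating cylinder bounded in the simplest admissible manner --- and check that the product of its local volume factor with $c(\mathcal{C})$ tends to exactly $\tfrac14$, which is the one genuinely arithmetic point of the argument. All remaining configurations (several simultaneous cylinders, separating cylinders, or collapses producing extra simple poles) should be shown to contribute $o(1)$. This is also where the exclusion of hyperelliptic components enters: their anomalous one-cylinder statistics would spoil the value $\tfrac14$, so they must be removed, exactly as in the hypotheses, and likewise the threshold $g\ge 6$ is the point from which the generic count dominates.

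\textbf{Main obstacle.} The hard part is the uniform error control in Step~3. Conjecture~\ref{mc:vol} pins down volumes only up to a multiplicative $1+\varepsilon_1(\boldsymbol{d})$ with $\varepsilon_1\to 0$ uniformly over $\hat\Pi_{4g-4}$, so one must verify that forming ratios does not amplify these errors and that the sum over the growing number of subdominant configuration types still decays uniformly. The restriction $\boldsymbol{d}\in\hat\Pi_{4g-4}$, bounding the number of simple poles by $\log g$, is precisely what should tame the proliferation of exceptional boundary strata; combined with the combinatorial counting estimates developed for the Abelian case in the proofs of the Eskin--Zorich conjectures, it should suffice to bound the subdominant terms and force $\varepsilon_2(\boldsymbol{d})\to 0$.
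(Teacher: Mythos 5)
First, be aware that the statement you were asked to prove is a \emph{conjecture}: the paper does not prove it. What the paper offers is (i) numerical evidence (exact values of $\carea(\cQ_{g,n})$ up to $g=250$ via Goujard's formula~\eqref{eq:carea:Elise} combined with the Chen--M\"oller--Sauvaget/Kazarian volume computations), and (ii) a conditional derivation, Corollary~\ref{cor:1:and:3:imply:2}: Conjecture~\ref{mc:vol} together with Conjecture~\ref{conj:config} (asymptotic support of $\carea$ on the multiplicity-one configurations $\mathcal{C}_{b,\mathrm{I}}$, $\mathcal{C}_{b,\mathrm{II}}$) imply Conjecture~\ref{mc:SV}. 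Your proposal has the same architecture as (ii): Goujard's configuration formula, volume asymptotics fed into the ratios of boundary-stratum volumes, and a domination statement for the simplest configurations, which you correctly identify as the main open obstacle --- it is exactly the part that remains conjectural in the paper (even in the Abelian case it required a separate paper of Aggarwal).

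However, your Step~3 as written would fail. You propose to ``single out the dominant configuration --- a single non-separating cylinder bounded in the simplest admissible manner --- and check that the product of its local volume factor with $c(\mathcal{C})$ tends to exactly $\tfrac14$.'' No such single configuration exists: in the paper's computation each individual configuration contributes a \emph{vanishing} amount, e.g. $c_{area}(\mathcal{C}_{b,\mathrm{I}}(d_i,d_j))\sim (d_i+2)(d_j+2)/(2^3d^2)\to 0$ for bounded $d_i,d_j$, as in~\eqref{eq:config:I}. The value $\tfrac14$ emerges only \emph{collectively}: one sums type~$\mathrm{I}$ over all pairs of zeroes and type~$\mathrm{II}$ over all splittings $a_1+a_2=d_i-2$ of every zero, and the identity $\sum_i(d_i+2)=2d$ combined with $\bigl(\sum_i(d_i+2)\bigr)^2=\sum_i(d_i+2)^2+2\sum_{i<j}(d_i+2)(d_j+2)$ yields $\frac{1}{16d^2}\bigl((2d)^2-6d\bigr)\to\tfrac14$. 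Moreover, which type carries the mass depends on the stratum: for the principal stratum $\cQ(1^{4g-4})$ the type~$\mathrm{I}$ family alone accounts for $\approx\tfrac14$ and type~$\mathrm{II}$ is negligible, while for the minimal stratum $\cQ(4g-4)$ there are no type~$\mathrm{I}$ configurations at all and the type~$\mathrm{II}$ family alone gives $\tfrac14$. So the dichotomy ``one dominant configuration plus $o(1)$ errors'' is structurally wrong; the correct formulation (Conjecture~\ref{conj:config}) is that the entire growing family of multiplicity-one configurations jointly supports $\carea$, and the summation over this family --- including the corrections coming from entries $d_i=-1$, which is where the hypothesis $\mu_{-1}=o(g)$ enters --- is where the arithmetic actually happens. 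Your cancellation claim in Step~2 (the $\tfrac4\pi$ prefactors and the untouched local factors cancelling in each ratio) is correct and matches the paper, but it only shows each ratio is of order $1/d^2$; it cannot by itself produce the constant $\tfrac14$ from any single surgery.
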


\begin{Corollary}
Applying  the formula from~\cite{Eskin:Kontsevich:Zorich} for the sum
of  the Lyapunov exponents of the Hodge bundle over the Teichm\"uller
geodesic  flow  in  any  nonhyperelliptic  connected  component  of a
stratum $\cQ(\boldsymbol{d})$ of meromorphic quadratic differentials
with $\boldsymbol{d}\in\hat\Pi(4g-4)$
we get
\begin{equation}
\label{eq:sum:of:plus:exponents}
\lambda^+_1 + \dots + \lambda^+_g
\ = \
\cfrac{1}{24}\cdot\sum_{d_i\in \boldsymbol{d}} \cfrac{d_i(d_i+4)}{d_i+2}
\ +\ \frac{\pi^2}{12}+\frac{\pi^2}{3}\cdot\epsilon_2(\boldsymbol{d})\,.
\end{equation}
\end{Corollary}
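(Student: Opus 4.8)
The plan is to read off~\eqref{eq:sum:of:plus:exponents} by substituting the conjectural value of the area Siegel--Veech constant from Conjecture~\ref{mc:SV} into the Eskin--Kontsevich--Zorich formula. In this sense the statement is a corollary in the strict meaning of the word: once the two inputs are granted, the derivation is a single line of algebra and involves no further geometry or dynamics.

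Recall first the relevant instance of the formula of~\cite{Eskin:Kontsevich:Zorich}. For any connected component $\cQ$ of a stratum $\cQ(d_1,\dots,d_n)$ of meromorphic quadratic differentials with at most simple poles, the sum $\lambda^+_1+\dots+\lambda^+_g$ of the non-negative Lyapunov exponents of the Hodge bundle over the Teichm\"uller geodesic flow satisfies
\[
\lambda^+_1+\dots+\lambda^+_g
\;=\;
\frac{1}{24}\sum_{i=1}^{n}\frac{d_i(d_i+4)}{d_i+2}
\;+\;\frac{\pi^2}{3}\,\carea(\cQ).
\]
Here the first summand is the explicit combinatorial term depending only on the unordered partition $\boldsymbol{d}$, while all the dynamical content is encapsulated in the single real number $\carea(\cQ)$. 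I note that the combinatorial term already coincides with the one appearing in~\eqref{eq:sum:of:plus:exponents}, which fixes the correct normalization of the formula being invoked.

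Next I would apply Conjecture~\ref{mc:SV}. For a non-hyperelliptic component $\cQ$ with $\boldsymbol{d}\in\hat\Pi_{4g-4}$ and $g\ge 6$ it asserts $\carea(\cQ)=\tfrac14\bigl(1+\varepsilon_2(\boldsymbol{d})\bigr)$, hence
\[
\frac{\pi^2}{3}\,\carea(\cQ)
\;=\;
\frac{\pi^2}{12}\;+\;\frac{\pi^2}{12}\,\varepsilon_2(\boldsymbol{d}).
\]
Inserting this into the displayed Eskin--Kontsevich--Zorich formula isolates the genus-independent constant $\tfrac{\pi^2}{12}$ and leaves an error term that is a fixed multiple of $\varepsilon_2(\boldsymbol{d})$; since the precise coefficient in front of this uniformly vanishing quantity is immaterial for the asymptotic content, this is exactly the shape of~\eqref{eq:sum:of:plus:exponents}. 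The uniform decay $\max_{\boldsymbol{d}\in\hat\Pi_{4g-4}}|\varepsilon_2(\boldsymbol{d})|\to 0$ as $g\to\infty$, guaranteed by Conjecture~\ref{mc:SV}, is inherited verbatim by the error term above.

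I expect no computational obstacle in this argument: its only nontrivial input is Conjecture~\ref{mc:SV}, so the corollary is conditional on that conjecture, restricted to non-hyperelliptic components with $g\ge 6$, where the hyperelliptic corrections to the Siegel--Veech count do not intervene. The genuine difficulty lies entirely upstream, in proving the Siegel--Veech asymptotics~\eqref{eq:carea:to:1:4}; granting those, the passage to~\eqref{eq:sum:of:plus:exponents} is purely formal.
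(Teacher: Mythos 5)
Your proof is correct and coincides with the paper's own (implicit) derivation: the corollary is stated there without proof, being precisely the substitution of the conjectural value $\carea(\cQ)=\tfrac14\big(1+\varepsilon_2(\boldsymbol{d})\big)$ from Conjecture~\ref{mc:SV} into the Eskin--Kontsevich--Zorich formula for $\lambda^+_1+\dots+\lambda^+_g$, whose combinatorial term has no correction from odd $d_j$ (unlike the $\lambda^-$ formula quoted later in the paper). You also handled correctly the one subtle point: literal substitution yields the error term $\tfrac{\pi^2}{12}\,\varepsilon_2(\boldsymbol{d})$ rather than the stated $\tfrac{\pi^2}{3}\cdot\epsilon_2(\boldsymbol{d})$, a factor-of-4 discrepancy (in effect a renaming of the error term, or a typo in the paper) that is immaterial since $\varepsilon_2$ tends to zero uniformly as $g\to\infty$.
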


\begin{Remark}
Paper~\cite{Chen:Moeller:Sauvaget} proved formulae for the Masur--Veech volume of the principal strata of meromorphic quadratic differentials
and for the corresponding Siegel--Veech constant $\carea$ in terms of intersection numbers. This paper also suggested analogous conjectural formula for other strata. We hope that the algebro-geometric conjectures from~\cite{Chen:Moeller:Sauvaget} and the numerical conjectures as above might be mutually useful.
\end{Remark}

\section{Numerical  evidence.}
\label{s:Numerical:evidence}
\noindent
\textbf{Siegel--Veech constant.~}
Numerical evaluation of Lyapunov exponents allows to compute approximate values of Siegel--Veech constants $\carea(\cQ)$ for connected components $\cQ$ of strata $\cQ(\boldsymbol{d})$ in the moduli space of meromorphic quadratic differentials with at most simple poles through the following formula from~\cite{Eskin:Kontsevich:Zorich}:
$$
\lambda^-_1 + \dots + \lambda^-_{g_{\mathit{eff}}}
\ = \
\cfrac{1}{24}\cdot\sum_{d_i\in \boldsymbol{d}} \cfrac{d_i(d_i+4)}{d_i+2}
\ +\
\cfrac{1}{4}\cdot\sum_{\substack{d_j\in \boldsymbol{d}\\d_j\equiv 1\mathit{mod}\, 2}}
\cfrac{1}{d_j+2}
\ +\
\frac{\pi^2}{3}\cdot\carea(\cQ)\,.
$$
Here the \textit{effective genus} $g_{\mathit{eff}}$ is defined as $g_{\mathit{eff}}=\hat g - g$, where $\hat g$ is the genus of the canonical double cover $p: \hat C\to C$ (ramified at simple poles and at zeroes of odd orders of the quadratic differential $q$) such that
$p^\ast q=\hat\omega$ is a square of a globally defined Abelian differential $\hat\omega$ on $\hat C$.

Simulations performed for numerous strata in large genera provide strong numerical evidence for conjectural asymptotics~\eqref{eq:carea:to:1:4} from Conjecture~\ref{mc:SV}. For example, experiments performed for about $20$ random strata with $0,1,2,3$ simple poles in genera in the range from $20$ to $30$ give approximate values of $\carea(\cQ)$ varying from $0.247$ to $0.259$ for $10^6$ iterations of fast Rauzy induction. Analogous experiments with the principal stratum $\cQ_{g,0}=\cQ(1^{4g-4})$ give
approximate values of
$\carea(\cQ_{g,0})$ varying from $0.251$ to $0.253$ for $10^7$ iterations of fast Rauzy induction for genera from $20$ to $30$ and
approximate values of
$\carea(\cQ_{g,0})$ varying from $0.255$ to $0.258$ for $10^6$ iterations of fast Rauzy induction for genera about $40$. Similar simulations for the principal strata were independently performed by Ch.~Fougeron.

As we explained in the introduction,
combining very recent results~\cite{Chen:Moeller:Sauvaget} and~\cite{Kazarian} we can compute
exact values of volumes of strata $\cQ_{g,n}=\cQ(1^{4g-4+n},-1^n)$
and then compute exact values of the Siegel--Veech constants $\carea\big(\cQ_{g,n}\big)$ due to the following result.

\begin{NNTheorem}[\cite{Goujard:carea}]
Let $g$ be a strictly positive integer, and $n$ nonnegative integer.
When $g=1$ we assume that $n\ge 2$. Under the above conventions
the following formula is valid:
\begin{align}
\label{eq:carea:Elise}
\carea(\cQ_{g,n}) =
\frac{1}{\Vol(\cQ_{g,n})}\cdot\Bigg(
\frac{1}{8}
 \sum_{\substack{g_1+g_2=g
 \\ n_1+n_2=n+2 \\
 g_i\geq 0, n_i\geq 1, d_i\geq 1}}
 \frac{\ell!}{\ell_1!\ell_2!}\frac{n!}{(n_1-1)!(n_2-1)!}\cdot
 \\
\notag
\cdot\frac{(d_1-1)!(d_2-1)!}{(d-1)!}
\Vol(\cQ_{g_1, n_1})\times\Vol(\cQ_{g_2,n_2})
\ +\hspace*{80pt}
\\+\
\notag
\frac{1}{16}\cdot\frac{(4g-4+n)n(n-1)}{(6g-7+2n)(6g-8+2n)}
\Vol(\cQ_{0, 3})\times\Vol(\cQ_{g,n-1})\ +
\\
\notag
+\ \frac{\ell!}{(\ell-2)!}\frac{(d-3)!}{(d-1)!}
 \Vol(\cQ_{g-1,n+2})
\Bigg)
 \,.
\end{align}
Here $d=\dim_{\C}\cQ_{g,n}=6g-6+2n$, $d_i=6g_i-6+2n_i$,
$\ell=4g-4+n$, $\ell_i=4g_i-4+n_i$\,.
\end{NNTheorem}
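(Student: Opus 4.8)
The plan is to obtain~\eqref{eq:carea:Elise} from the Siegel--Veech formula, which identifies the product $\carea(\cQ_{g,n})\cdot\Vol(\cQ_{g,n})$ with a finite sum of contributions indexed by the topological types of a degenerating horizontal cylinder. First I would set up the Siegel--Veech transform of the counting function for maximal horizontal cylinders weighted by their area; since for the Masur--Veech measure this transform is integrable, $\carea\cdot\Vol$ is recovered as the leading coefficient in the large-$L$ asymptotics of the mean weighted count of cylinders of circumference at most $L$. The structural input is that, to leading order, each maximal cylinder is recorded by its core curve $\gamma$, a simple closed curve on the flat surface, and that collapsing $\gamma$ produces a point of the principal boundary of the stratum. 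Thus $\carea(\cQ_{g,n})\cdot\Vol(\cQ_{g,n})$ decomposes as a sum over the degenerations obtained by pinching $\gamma$, and the boundary volumes appearing in each term can then be identified using the volume formulae of~\cite{DGZZ:volume,Chen:Moeller:Sauvaget}.

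The second step is the classification of these degenerations for the principal stratum $\cQ(1^{4g-4+n},-1^n)$. Exactly three types of core curve occur, matching the three summands. If $\gamma$ is separating, collapsing it disconnects $C$ into two components carrying principal quadratic differentials of genera $g_1,g_2$ with $g_1+g_2=g$; each component acquires one new marked point at the node, whence $n_1+n_2=n+2$, and the $n$ simple poles together with the $\ell=4g-4+n$ labeled zeros are distributed between the two sides. If $\gamma$ is non-separating, collapsing it lowers the genus by one and creates two new marked points, producing $\cQ_{g-1,n+2}$. The remaining summand, carrying the factor $\Vol(\cQ_{0,3})$, corresponds to the exceptional configuration in which $\gamma$ cuts off a piece of minimal type: since $\dim_{\C}\cQ_{0,3}=6\cdot 0-6+2\cdot 3=0$ this piece is rigid and contributes the constant $\Vol(\cQ_{0,3})$. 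This is the analogue of the \emph{figure-eight} boundary configuration of Eskin--Masur--Zorich, and it must be isolated from the generic separating case precisely because the pinched component does not vary in moduli.

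The third step is to evaluate each contribution as a product of boundary volumes times explicit constants. The combinatorial prefactors arise from three sources: the choice of which labeled zeros and poles land on each side of the node (yielding the multinomial factors $\tfrac{\ell!}{\ell_1!\ell_2!}$ and $\tfrac{n!}{(n_1-1)!(n_2-1)!}$ in the separating case, $\tfrac{\ell!}{(\ell-2)!}$ in the non-separating case, and the ordered-pair factor $n(n-1)$ in the exceptional case); the integration over the width and height of the reconstructed cylinder against the Masur--Veech measure (which produces ratios of factorials of the complex dimensions of Beta-function type, such as $\tfrac{(d_1-1)!(d_2-1)!}{(d-1)!}$ and $\tfrac{(d-3)!}{(d-1)!}$, reflecting how the $d$ period coordinates of $\cQ_{g,n}$ split across the boundary); and the local symmetry factors of the cylinder, responsible for the overall constants $\tfrac18$ and $\tfrac1{16}$. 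Here I would adapt the principal-boundary analysis of Eskin--Masur--Zorich from the abelian to the quadratic setting, using the relation between the volume of a principal boundary stratum and the leading term of the cylinder count.

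The main obstacle I anticipate is pinning down these constants exactly: matching the normalization of $\Vol(\cQ_{g,n})$ used in~\cite{AEZ:genus:0,DGZZ:volume,Chen:Moeller:Sauvaget} against the natural measure induced on the boundary strata, correctly tracking the factors of $2$ coming from the canonical double cover and from the labeling of zeros, and verifying that the dimension-ratio factors emerge with the stated indices rather than shifted ones. A further delicate point is the treatment of the rigid $\cQ_{0,3}$ configuration and of the low-genus base cases excluded by the hypotheses (for instance the requirement $n\ge 2$ when $g=1$), where the generic combinatorial count degenerates and the formula must be checked by hand. Once the constants are fixed, summing the three contributions and dividing by $\Vol(\cQ_{g,n})$ yields~\eqref{eq:carea:Elise}.
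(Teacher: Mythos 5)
This paper contains no proof of the statement: the theorem is imported verbatim from Goujard's paper \cite{Goujard:carea}, where formula~\eqref{eq:carea:Elise} arises as the specialization to the principal strata $\cQ_{g,n}=\cQ(1^{4g-4+n},-1^n)$ of a general formula expressing $\carea$ of an arbitrary stratum through Masur--Veech volumes of its principal boundary strata. Measured against that source, your outline follows the same route, not a different one: a Siegel--Veech/Eskin--Masur transform argument, reduction of the area-weighted cylinder count to degenerations onto the principal boundary in the style of Eskin--Masur--Zorich \cite{Eskin:Masur:Zorich}, and evaluation of each degeneration as an explicit combinatorial constant times a ratio of volumes. So the question is whether your outline supplies the content of that proof; it does not, and some of its geometric identifications are wrong.

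The central gap is the classification step, which you assert (``exactly three types of core curve occur'') rather than prove, and which you phrase in terms of the wrong objects. What degenerates is not the core curve of the cylinder but a configuration of \emph{homologous} saddle connections in the sense of Masur--Zorich (homology on the canonical double cover); their classification is the main theorem of \cite{Masur:Zorich} and is consumed as essential input by Goujard's proof. The core-curve picture cannot produce the actual combinatorics. For instance, the last term of~\eqref{eq:carea:Elise} comes from a cylinder whose two boundary saddle connections join two \emph{distinct labeled simple zeros}; shrinking the configuration converts those two zeros into simple poles, which is why the boundary stratum is $\cQ_{g-1,n+2}$ (two fewer zeros, two more poles) and why the prefactor is $\ell!/(\ell-2)!=\ell(\ell-1)$, the number of ordered pairs of zeros --- compare the contribution of the configuration $\mathcal{C}_{b,\mathrm{I}}(d_i,d_j)$ in Section~\ref{s:linearised:carea} of the paper, where the orders drop from $d_i$ to $d_i-2$. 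Your description ``two new marked points at the node'' explains neither fact. Similarly, the middle term is misidentified: $\cQ_{0,3}$ is not a rigid zero-dimensional stratum, it is \emph{empty} (it would need $4g-4+n=-1$ simple zeros), and $\Vol(\cQ_{0,3})$ is a formal convention by which Goujard encodes the degenerate configuration involving one simple zero and two simple poles --- whence the factor $(4g-4+n)\,n(n-1)$ --- rather than a separating curve cutting off a genuine subsurface. Finally, the constants $\tfrac18$, $\tfrac1{16}$, the Beta-type dimension factors, and the normalization matching are exactly what the theorem asserts, and your plan defers all of them; note also that invoking the volume formulae of \cite{DGZZ:volume,Chen:Moeller:Sauvaget} is both unnecessary and anachronistic (they postdate \cite{Goujard:carea}), since the boundary volumes enter simply as Masur--Veech volumes of smaller strata.
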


The above formula gives exact values of $\carea(Q_{g,n})$ for pairs $g,n$ up to $g=250$ and larger. The analysis of the resulting data provides serious numerical evidence towards validity of Conjecture~\ref{mc:SV}. In particular, for $g=3,4,\dots,250$, the Siegel--Veech constant $\carea(Q_{g,0})$
is monotonously decreasing from $0.284275$ to $0.250285$.
For a fixed genus $g$ and small
values $n=0,1,2,\dots$ of $n$, the Siegel--Veech constant
$\carea(Q_{g,n})$ does not
fluctuate much. For example, the approximate values of $\carea(Q_{250,n})$ for $n=0,1,\dots,6$ are given by the following list:
$$
\{
0.250285, 0.250118, 0.249992, 0.249909, 0.249867, 0.249867, 0.249909
\}\,.
$$

\smallskip

\noindent
\textbf{Volume asymptotics.~}
The arguments towards Conjecture~\ref{mc:vol} are indirect. We start by recalling the scheme which was used to formulate analogous Conjecture in~\cite{Eskin:Zorich} for large genus asymptotics of Masur--Veech volumes of strata
of Abelian differentials.
Certain phenomena which had a status of conjecturs at the early stage of the project~\cite{Eskin:Zorich}
are proved in the recent paper~\cite{Aggarwal:Siegel:Veech} of A.~Aggarwal.
We conjecture that strata in the moduli spaces of meromorphic quadratic differentials have analogous geometric properties.

Having explained the scheme in the case of Abelian differentials (where everything is proved by now) we describe necessary adjustments which we use for the case of quadratic differentials. In this latter case, part of results are still conjectural.
\smallskip

\noindent
\textbf{Stating volume asymptotics conjecture for Abelian differentials.~}
Paper~\cite{Eskin:Masur:Zorich}
provided a formula for
the Siegel--Veech constant $\carea(\cH(m))$
of any (connected component of any) stratum $\cH(m)$
in the moduli space of Abelian differentials. This formula expressed $\carea(\cH(m))$
through Masur--Veech volumes of the associated \textit{principal boundary} strata normalized by $\Vol\cH(m)$. It was conjectured that in large genera a dominant part of the contribution to $\carea(\cH(m))$ comes from the boundary strata corresponding to simplest degenerations (represented by so-called \textit{configurations of multiplicity one}).
Neglecting contributions of more sophisticated boundary strata one obtains particularly simple approximate formula
for $\carea(\cH(m))$ as an explicit weighted sum of ratios
of volumes of simplest boundary strata over the volume of the stratum $\cH(m)$.
The only way to get the constant asymptotic value $\carea(\cH(m))\approx\tfrac{1}{2}$ in this expression leads to the formula
\begin{equation}
\label{eq:up:to:const}
\Vol\cH(m_1,\dots,m_n)= const\cdot\prod_{i=1}^n\cfrac{1}{m_i+1}
\cdot \big(1+\varepsilon_3(m)\big)\,,
\end{equation}
where $\varepsilon_3(m)\to 0$ as $g\to+\infty$ uniformly
for all partitions $m$ of $2g-2$ into a sum of positive integers.

Evaluation of the universal constant $const$ in the above formula is a separate nontrivial problem. Since the Masur--Veech volumes of strata appear in the approximate formula
for $\carea(\cH(m))$ only in ratios of the volume of some smaller stratum over the volume of the original stratum, the global normalization constant cancels out in every ratio. In the case of Abelian differentials, the constant $const$ was guessed from numerics.
Namely, in the particular case of the principal stratum,
the original formula of A.~Eskin and A.~Okounkov from~\cite{Eskin:Okounkov} allows to compute $\Vol\cH(1^{2g-2})$ for genus $g=60$
and higher, which allowed to guess the correct value
$const=4$. Asymptotics~\eqref{eq:up:to:const}
was rigorously proved for particular cases in~\cite{Chen:Moeller:Zagier} and in~\cite{Sauvaget:minimal:stratum} and then in~\cite{Aggarwal:Volumes}
and~\cite{Chen:Moeller:Sauvaget:Zagier} in general case.

We address the reader interested in more details to~\cite{Zorich:Appendix:2:Amol} where the notions of \textit{configuration of multiplicity one} is explained in details, and where the contributions of such configurations to all possible Siegel--Veech constants, including $\carea(\cH(m))$, are computed in full details. The conjecture that the total contribution to $\carea(\cH(m))$ coming from more complicated configurations becomes negligible in large genera is recently proved by A.~Aggarwal in~\cite{Aggarwal:Siegel:Veech}.
\smallskip

\noindent
\textbf{Stating volume asymptotics conjecture for quadratic differentials.~}
Developing technique from~\cite{Eskin:Masur:Zorich} and using the topological description of the principal boundary strata in the moduli spaces of quadratic differentials given in~\cite{Masur:Zorich},
E.~Goujard obtained in~\cite{Goujard:carea} an expression for the Siegel--Veech constant $\carea(\cQ(\boldsymbol{d}))$
through Masur--Veech volumes of the stratum $\cQ(\boldsymbol{d})$ and of its principal boundary. Formula~\eqref{eq:carea:Elise}
is a particular case of this more general formula.

Analogously to the situation with Abelian differentials, we expect that the total contribution
to $\carea(\cQ(\boldsymbol{d}))$ coming from the principal boundary strata different from the simplest ones becomes negligible in high genera. We describe this conjecture in more details in Section~\ref{s:linearised:carea}. Extracting from the formula of Goujard for $\carea(\cQ(\boldsymbol{d}))$ the contribution
coming from these simplest degenerations we get an approximate formula for $\carea(\cQ(\boldsymbol{d}))$
through a weighted sum of ratios of volumes of the boundary strata divided by the volume of the stratum under consideration. In the particular case of the strata $\cQ_{g,n}$ this corresponds to removing
from~\eqref{eq:carea:Elise} the summands containing products of the volumes,
see Section~\ref{s:linearised:carea} for more details.

Similarly to the case of Abelian differentials, (up to a global normalization factor) expression~\eqref{eq:asymptotic:formula:for:the:volume} is the unique expression, such that for any $\boldsymbol{d}\in\hat\Pi$
the resulting weighted sum tends to the asymptotic
value $\carea(\cQ(\boldsymbol{d}))\approx \frac{1}{4}$.
We provide this weighted sum for $\carea(\cQ(\boldsymbol{d}))$ in Section~\ref{s:linearised:carea} below.

As in the case of Abelian differentials, this approach does not allow to find the global constant factor in the asymptotic formula~\eqref{eq:asymptotic:formula:for:the:volume}.
The corresponding factor $\tfrac{4}{\pi}$ was originally conjectured in~\cite{DGZZ:volume} for the large genus asymptotics of the Masur--Veech volume of the principal stratum of holomorphic quadratic differentials. This conjecture is based on fine
geometric considerations combined with elaborate computations
performed in~\cite{DGZZ:volume}.

The formula from~\cite{Chen:Moeller:Sauvaget} for the same volume combined with a very efficient recursion from~\cite{Kazarian} for the corresponding linear Hodge integrals, involved in this formula, provide together very serious numerical evidence
for validity of our conjecture for the strata with only simple zeroes and with simple poles, when the number of simple poles is small enough.

For any pair $g,n$ of nonnegative integers define
$$
\Vol^{appr}\cQ_{g,n}=\cfrac{4}{\pi}\cdot 2^n\cdot\left(\frac{8}{3}\right)^{4g-4+n}\,.
$$
This value corresponds to the right-hand side of expression~\eqref{eq:asymptotic:formula:for:the:volume}
applied to the partition $\boldsymbol{d}=(1^{4g-4+n},-1^n)$, where
the factor $\big(1+\varepsilon_1(\boldsymbol{d})\big)$ is omitted.
The sequence of ratios $\cfrac{\Vol\cQ^{appr}_{g,0}}{\Vol\cQ_{g,0}}$ is monotonously decreasing for the range $g=3,4,\dots,250$, from $\cfrac{\Vol\cQ^{appr}_{3,0}}{\Vol\cQ_{3,0}}\approx 1.01892$ to $\cfrac{\Vol\cQ^{appr}_{250,0}}{\Vol\cQ_{250,0}}\approx 1.00027$. For any fixed genus $g$ and small
values $n=0,1,2,\dots$ of $n$, the volumes do not fluctuate much. For example, the approximate values of $\cfrac{\Vol\cQ^{appr}_{250,n}}{\Vol\cQ_{250,n}}$ for $n=0,1,\dots,5$ are given by the following list:
$$
\{
{1.00027406, 1.00027477, 1.00027493, 1.00027481, 1.00027469, 1.00027484}\}\,.
$$
In particular, these numerical data corroborates our conjectural value $\tfrac{4}{\pi}$ of the universal normalizing constant.

\begin{Remark}
Analogous analysis of numerical data was independently performed by D.~Chen, M.~M\"oller and A.~Sauvaget who used the algorithm
from~\cite{Yang:Zagier:Zhang}.
\end{Remark}
\smallskip

\noindent
\textbf{Range of validity of the conjecture.~}
E.~Goujard computed exact values of the Masur--Veech volumes of strata in the moduli space of quadratic differentials for strata of dimension at most $11$.
Consider expression~\eqref{eq:asymptotic:formula:for:the:volume}
with omitted factor  $\big(1+\varepsilon_1(\boldsymbol{d})\big)$
and divide it
by the exact value of the volume $\Vol\cQ(\boldsymbol{d})$. The resulting ratio
evaluated for strata of genera $5$ and $6$
of dimension $11$ varies from
$1.037$ to $1.135$, which suggests that
formula~\eqref{eq:asymptotic:formula:for:the:volume}
gives reasonable approximation of the Masur--Veech volume already for strata of relatively small genus.

\section{Contribution of configurations of multiplicity one to $\carea(\cQ(\boldsymbol{d}))$.}
\label{s:linearised:carea}

We refer the reader to~\cite{Masur:Zorich} and to~\cite{Goujard:volumes} for the notions  ``\textit{configurations of homologous saddle connections}'' and ``\textit{principal boundary strata}''.

\begin{Conjecture}
\label{conj:config}
For all strata $\cQ(d_1, \dots, d_n)$ with $(d_1,\dots,d_n)\in\hat\Pi(4g-4)$,
the Siegel--Veech constant  $\carea(\cQ(d_1, \dots, d_n))$ is asymptotically
supported on the following set of configurations:

$$
\begin{array}{rl}
\includegraphics{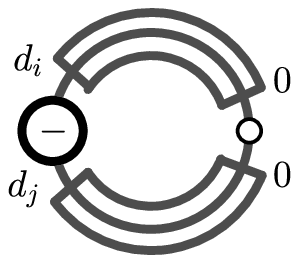}
\vspace*{60pt}
&
\includegraphics{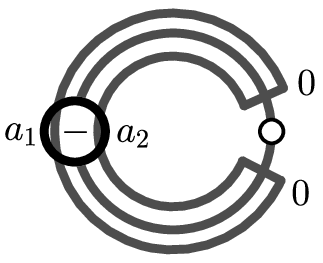}
\\
\mathcal{C}_{b,\mathrm{I}}(d_i,d_j)
\hspace*{40pt}
&
\hspace*{40pt}
\mathcal{C}_{b,\mathrm{II}}(a_1, a_2)
\end{array}
$$
where $d_i,d_j, a_1, a_2$ are
such that $d_i, d_j\geq 1$, $\{d_i,d_j\}\subset\{d_1, \dots, d_n\}$, where $i\neq j$, $a_1, a_2\geq 0$, $a_1+a_2\ge 3$ and $a_1+a_2+2\in\{d_1, \dots, d_n\}$.
\end{Conjecture}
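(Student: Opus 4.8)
\medskip

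The plan is to reduce Conjecture~\ref{conj:config} to a volume comparison and then to carry out that comparison using the asymptotics of Conjecture~\ref{mc:vol}, in parallel with the combinatorial scheme that Aggarwal~\cite{Aggarwal:Siegel:Veech} implemented for Abelian differentials. The starting point is the full form of Goujard's formula from~\cite{Goujard:carea}, of which~\eqref{eq:carea:Elise} is the special case $\boldsymbol{d}=(1^{4g-4+n},-1^n)$. It expresses
$$
\carea(\cQ(\boldsymbol{d}))=\frac{1}{\Vol\cQ(\boldsymbol{d})}\sum_{\mathcal{C}}w(\mathcal{C})\prod_{v}\Vol\cQ\big(\boldsymbol{d}^{(v)}_{\mathcal{C}}\big)
$$
as a finite sum over the admissible configurations $\mathcal{C}$ of homologous saddle connections classified in~\cite{Masur:Zorich}; here $v$ indexes the components of the principal boundary stratum of $\mathcal{C}$, the partition $\boldsymbol{d}^{(v)}_{\mathcal{C}}$ records the orders of the zeros created by the degeneration, and $w(\mathcal{C})$ is an explicit combinatorial weight. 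Writing $S_{\mathrm{I,II}}$ for the subsum over the two families $\mathcal{C}_{b,\mathrm{I}}$, $\mathcal{C}_{b,\mathrm{II}}$ and $S_{\mathrm{rest}}$ for the subsum over all remaining configurations, the statement to prove is that $S_{\mathrm{rest}}/\carea(\cQ(\boldsymbol{d}))\to0$ uniformly over $\boldsymbol{d}\in\hat\Pi(4g-4)$; it suffices to show $S_{\mathrm{rest}}\to0$ while $S_{\mathrm{I,II}}$ stays bounded below.

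First I would substitute~\eqref{eq:asymptotic:formula:for:the:volume} into each term. Because $\Vol\cQ(\boldsymbol{d})\approx\tfrac{4}{\pi}\prod_i\tfrac{2^{d_i+2}}{d_i+2}$, each ratio $\prod_v\Vol\cQ(\boldsymbol{d}^{(v)}_{\mathcal{C}})/\Vol\cQ(\boldsymbol{d})$ collapses to a bounded product of the local factors $\tfrac{2^{d+2}}{d+2}$ over the finitely many zeros affected at the node, times one factor $\tfrac{4}{\pi}$ for each extra connected component produced by the degeneration; all of the genus dependence then sits in the weight $w(\mathcal{C})$. A direct check shows that the connected, multiplicity-one degenerations $\mathcal{C}_{b,\mathrm{I}}(d_i,d_j)$ and $\mathcal{C}_{b,\mathrm{II}}(a_1,a_2)$ already reproduce the value $\tfrac14$: for instance, in the principal stratum $\cQ_{g,0}$ the sole surviving term $\Vol\cQ_{g-1,2}$ of~\eqref{eq:carea:Elise} has weight $\ell(\ell-1)/\big((d-1)(d-2)\big)\approx\tfrac49$ and volume ratio $(8/3)^{-2}\cdot2^{2}=\tfrac{9}{16}$, whose product is exactly $\tfrac14$. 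Verifying that the aggregate contribution of all type I and II configurations present in a general $\cQ(\boldsymbol{d})$ tends to $\tfrac14$, thereby re-deriving Conjecture~\ref{mc:SV}, is the content of the linearised expression for $\carea$ assembled in this section, and gives the lower bound on $S_{\mathrm{I,II}}$.

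The core of the proof is the estimate $S_{\mathrm{rest}}\to0$. The suppression is supplied entirely by $w(\mathcal{C})$, and in particular by the factorial ratios of complex dimensions it contains, such as $\tfrac{(d_1-1)!(d_2-1)!}{(d-1)!}$ for a surface that splits into pieces of dimensions $d_1,d_2$ with $d_1+d_2=d-2$. For a separating degeneration that detaches a fixed small genus $g_1$, the product of this ratio with the binomial-type factor $\tfrac{\ell!}{\ell_1!\ell_2!}$ scales like $g^{\,1-2g_1}$, hence like $g^{-1}$ for the dominant $g_1=1$ splittings and exponentially for balanced ones; summing over all splittings leaves $O(g^{-1})$. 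Configurations with several homologous saddle connections, or with additional pinched components, each contribute one further factorial ratio, compounding the decay. The bound $n\le\log g$ built into $\hat\Pi(4g-4)$ is what controls the pole-dependent terms: the term $\Vol\cQ_{0,3}\times\Vol\cQ_{g,n-1}$ of~\eqref{eq:carea:Elise}, for example, scales like $n^2/g$, which is $o(1)$ precisely when $n=O(\log g)$.

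The principal obstacle is to make the estimate $S_{\mathrm{rest}}\to0$ uniform over the entire family $\hat\Pi(4g-4)$, where the entries $d_i$ range from $-1$ up to $4g-4$. Two difficulties stand out. First, one must beat the sheer number of configurations, namely the fact that Masur--Zorich's classification for quadratic differentials is considerably richer than in the Abelian case and that the number of ways to distribute the zeros $d_1,\dots,d_n$ among the components of a degeneration grows combinatorially, against the per-configuration decay, uniformly and in the presence of the canonical double cover and the parity constraints it imposes; this is exactly the step that required the delicate combinatorics of~\cite{Aggarwal:Siegel:Veech}, now to be transported to pillowcase covers. Second, the volume asymptotics of Conjecture~\ref{mc:vol} is least sharp both for the extreme partitions in which a few zeros absorb almost all of the degree $4g-4$, and for the boundary strata of complicated configurations, which may acquire more poles than the logarithm of their own lowered genus and thus fall outside the regime of~\eqref{eq:asymptotic:formula:for:the:volume}; these regimes will have to be handled with separate, cruder volume bounds rather than the leading asymptotics.
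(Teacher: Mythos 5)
The statement you are asked to prove is labelled a \emph{Conjecture} in the paper, and the paper contains no proof of it: the authors offer only heuristic and numerical evidence, and the one thing they do prove (Corollary~\ref{cor:1:and:3:imply:2}) runs in the opposite direction to your plan --- it \emph{assumes} Conjecture~\ref{conj:config} (together with Conjecture~\ref{mc:vol}) and deduces Conjecture~\ref{mc:SV}. Your proposal therefore has to stand on its own, and it does not. Its structural skeleton is sound: Goujard's general formula from~\cite{Goujard:carea} does express $\carea(\cQ(\boldsymbol{d}))$ as a weighted sum of volume ratios over the Masur--Zorich configurations, and ``asymptotically supported on $\mathcal{C}_{b,\mathrm{I}},\mathcal{C}_{b,\mathrm{II}}$'' does mean precisely $S_{\mathrm{rest}}/\carea\to 0$. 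Your consistency check for $\cQ_{g,0}$ (weight $\approx 4/9$ times volume ratio $9/16$ giving $1/4$) is correct, as is the $g^{\,1-2g_1}$ scaling for separating degenerations of $\cQ_{g,n}$. But these are restatements and sanity checks, not the proof.

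The genuine gap is twofold. First, your argument is conditional on Conjecture~\ref{mc:vol}, which is itself open; at best you would obtain the implication (Conjecture~\ref{mc:vol} $\Rightarrow$ Conjecture~\ref{conj:config}), not Conjecture~\ref{conj:config} itself. Worse, Conjecture~\ref{mc:vol} cannot even be legitimately invoked where you need it: the boundary strata appearing in $S_{\mathrm{rest}}$ (and some in $S_{\mathrm{I,II}}$) need not lie in $\hat\Pi$ at their own, lowered genus --- they can acquire too many poles or extreme zero orders --- and you concede this yourself, postponing it to ``separate, cruder volume bounds'' that you never supply. Second, and decisively, the estimate that $S_{\mathrm{rest}}\to 0$ \emph{uniformly} over $\hat\Pi(4g-4)$ and over the full Masur--Zorich classification is not a lemma on the way to the conjecture; it \emph{is} the conjecture. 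Your treatment covers only separating degenerations of the special strata $\cQ_{g,n}$ via formula~\eqref{eq:carea:Elise}, and for everything else --- the combinatorially rich non-separating and multi-component configurations, with their parity constraints from the double cover --- you defer to a hoped-for transport of Aggarwal's combinatorics~\cite{Aggarwal:Siegel:Veech} to the quadratic setting, by your own admission ``exactly the step that required the delicate combinatorics,'' which you do not carry out. So the proposal is a plausible research programme, essentially the one the authors themselves sketch in Section~\ref{s:linearised:carea} and the surrounding discussion, but as a proof it places the entire content of the conjecture inside an unproven step.
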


\begin{Corollary}
\label{cor:1:and:3:imply:2}
For $n\le\log(g-1)-2$
a combination of
Conjecture~\ref{mc:vol} and Conjecture~\ref{conj:config} implies Conjecture~\ref{mc:SV}.
\end{Corollary}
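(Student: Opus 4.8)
The plan is to combine the general Siegel--Veech formula of Goujard~\cite{Goujard:carea}, of which~\eqref{eq:carea:Elise} is the special case $\boldsymbol{d}=(1^{4g-4+n},-1^n)$, with Conjecture~\ref{conj:config} in order to reduce $\carea(\cQ(\boldsymbol{d}))$ to a completely explicit weighted sum, and then to feed Conjecture~\ref{mc:vol} into that sum. By Conjecture~\ref{conj:config} only the configurations $\mathcal{C}_{b,\mathrm{I}}$ and $\mathcal{C}_{b,\mathrm{II}}$ support $\carea$ asymptotically, so that $\carea(\cQ(\boldsymbol{d}))=\big(1+o(1)\big)\big(A_{\mathrm{I}}+A_{\mathrm{II}}\big)$, where $A_{\mathrm{I}}$ is a sum over pairs $\{i,j\}$ with $d_i,d_j\ge 1$ and $A_{\mathrm{II}}$ is a sum over zeroes $d_k$ and admissible decompositions $d_k=a_1+a_2+2$. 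Each summand is the corresponding combinatorial coefficient of Goujard's formula times the ratio $\Vol(\cQ(\boldsymbol{d}'))/\Vol(\cQ(\boldsymbol{d}))$, where $\boldsymbol{d}'$ is the partition labelling the principal boundary stratum attached to that configuration.

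First I would linearise the volume ratios. Writing~\eqref{eq:asymptotic:formula:for:the:volume} for both $\Vol(\cQ(\boldsymbol{d}'))$ and $\Vol(\cQ(\boldsymbol{d}))$, the global factor $\tfrac{4}{\pi}$ cancels, and since $\boldsymbol{d}'$ agrees with $\boldsymbol{d}$ away from the singularities taking part in the degeneration, the product $\prod_i 2^{d_i+2}/(d_i+2)$ collapses to an explicit local factor. For $\mathcal{C}_{b,\mathrm{I}}(d_i,d_j)$, whose boundary merges the two zeroes into one of order $d_i+d_j$, this factor is $\tfrac{(d_i+2)(d_j+2)}{4(d_i+d_j+2)}$; for $\mathcal{C}_{b,\mathrm{II}}(a_1,a_2)$, whose boundary replaces the zero $d_k=a_1+a_2+2$ by two zeroes of orders $a_1,a_2$ together with two simple poles on a surface of one lower genus, it is $\tfrac{4(d_k+2)}{(a_1+2)(a_2+2)}$. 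Because $\varepsilon_1\to 0$ uniformly on $\hat\Pi_{4g-4}$, each factor $\big(1+\varepsilon_1(\boldsymbol{d}')\big)/\big(1+\varepsilon_1(\boldsymbol{d})\big)$ equals $1+o(1)$ uniformly, provided the accumulated error across all configurations stays controlled.

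Next I would evaluate the two sums separately. The coefficients of Goujard's formula carry the dimensional normalisation $\tfrac{1}{(\dim_{\C}\cQ(\boldsymbol{d})-1)(\dim_{\C}\cQ(\boldsymbol{d})-2)}$; using the identity $\sum_i(d_i+2)=2\dim_{\C}\cQ(\boldsymbol{d})$ I would carry out the inner sum over $a_1+a_2=d_k-2$ and then the outer sum over $k$, and verify that $A_{\mathrm{II}}\to\tfrac14$ uniformly, mirroring the model computation behind~\eqref{eq:carea:Elise}, where $\tfrac{\ell(\ell-1)}{(d-1)(d-2)}\cdot\tfrac{9}{16}\to\tfrac49\cdot\tfrac{9}{16}=\tfrac14$ in the principal case. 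The role of the hypothesis $n\le\log(g-1)-2$ is to bound $A_{\mathrm{I}}$: there are at most $\binom{n}{2}$ pairs, each local factor is $O(g)$, and the normalisation is $O(g^{-2})$, so $A_{\mathrm{I}}=O(n^2/g)=o(1)$; the same bound on the number of parts keeps the number of summands in $A_{\mathrm{II}}$ and the size of the individual coefficients small enough that the uniform $\varepsilon_1$-error does not accumulate.

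The main obstacle is precisely this last point: proving that $A_{\mathrm{II}}$ converges to the universal value $\tfrac14$ \emph{uniformly in the partition} $\boldsymbol{d}$, and not merely for a single partition of $4g-4$. The local factors and combinatorial weights depend on how $4g-4$ is distributed among the zeroes, so one must show that the weighted sum becomes insensitive to this distribution in the limit---this is where the special product form of~\eqref{eq:asymptotic:formula:for:the:volume} is essential---while simultaneously controlling the total contribution of the $\varepsilon_1$ errors over the $O(g)$ summands. Once this uniform convergence is established, combining $A_{\mathrm{I}}=o(1)$ with $A_{\mathrm{II}}=\tfrac14+o(1)$ yields $\carea(\cQ(\boldsymbol{d}))=\tfrac14\big(1+\varepsilon_2(\boldsymbol{d})\big)$ with $\max_{\boldsymbol{d}}|\varepsilon_2(\boldsymbol{d})|\to 0$, which is Conjecture~\ref{mc:SV}.
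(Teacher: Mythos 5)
Your overall plan coincides with the paper's (restrict Goujard's formula to the two configuration families via Conjecture~\ref{conj:config}, then linearise the volume ratios via Conjecture~\ref{mc:vol}), but two of your concrete steps are wrong, and the second is fatal. First, your principal boundary strata are not the right ones. For quadratic differentials the configuration $\mathcal{C}_{b,\mathrm{I}}(d_i,d_j)$ does \emph{not} merge the two zeroes into one of order $d_i+d_j$ --- that is the Abelian-differential picture of~\cite{Eskin:Masur:Zorich}; by Theorem~1 of~\cite{Goujard:carea} its boundary stratum is $\cQ_{g-1}(d_1,\dots,d_i-2,\dots,d_j-2,\dots,d_n)$ with coefficient $\frac{(d-3)!}{(d-1)!}\cdot 2d_id_j$, so that Conjecture~\ref{mc:vol} gives the per-pair contribution $\sim\frac{(d_i+2)(d_j+2)}{8d^2}$. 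Likewise the boundary of $\mathcal{C}_{b,\mathrm{II}}(a_1,a_2)$ is $\cQ_{g-1}(d_1,\dots,d_i-4,\dots,d_n)$; the stratum you propose (two zeroes of orders $a_1,a_2$ plus two simple poles in genus $g-1$) has complex dimension $2(g-1)-2+(n+3)=d+1>d$, so it cannot arise as a degeneration at all.

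Second, and more importantly, your structural claim that $A_{\mathrm{I}}=O(n^2/g)=o(1)$ while $A_{\mathrm{II}}\to\tfrac14$ is false, even under the hypothesis $n\le\log(g-1)-2$. With the correct factors one finds, up to corrections controlled by $\mu_{-1}$, that $A_{\mathrm{I}}\sim\frac{1}{16d^2}\bigl((2d)^2-\sum_i(d_i+2)^2\bigr)$ and $A_{\mathrm{II}}\sim\frac{1}{16d^2}\bigl(\sum_i(d_i+2)^2-6d\bigr)$. For $\boldsymbol{d}=(2g-2,2g-2)$ (which satisfies your hypothesis for large $g$) this gives $A_{\mathrm{I}}\approx A_{\mathrm{II}}\approx\tfrac18$; neither sum has a universal limit, and the split between them depends on the partition. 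The whole point of the paper's computation is that the partition-dependent quantity $\sum_i(d_i+2)^2$ \emph{cancels} between the two sums, via the identity $\bigl(\sum_i(d_i+2)\bigr)^2=\sum_i(d_i+2)^2+2\sum_{i<j}(d_i+2)(d_j+2)$ together with $\sum_i(d_i+2)=2d$, leaving $\frac{(2d)^2-6d}{16d^2}\to\tfrac14$. Your proposal contains no mechanism for this cancellation, so the ``main obstacle'' you flag (uniform convergence of $A_{\mathrm{II}}$ alone) cannot be overcome as stated; your scaling estimate also fails because Goujard's coefficient $\frac{2d_id_j}{(d-1)(d-2)}$ is of order $1$, not $O(g^{-1})$, when $d_i,d_j\sim g$. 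Finally, the hypothesis $n\le\log(g-1)-2$ is not there to kill $A_{\mathrm{I}}$: its role is to guarantee that the boundary partitions, which can acquire up to two new simple poles (e.g.\ when $d_i=1$ degenerates to $d_i-2=-1$), still lie in $\hat\Pi_{4(g-1)-4}$, so that Conjecture~\ref{mc:vol} may legitimately be applied to them; the error estimates themselves only require $\mu_{-1}=o(g)$, as the paper's closing remark points out.
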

\begin{proof}
First note that our normalization of the Masur--Veech volume implies that
$\Vol\cQ(0,d_1,\dots,d_n)=2\Vol\cQ(d_1,\dots,d_n)$,
see~\cite{AEZ:genus:0}. Hence,
$$
\varepsilon_1(0,d_1,\dots,d_n)=\varepsilon_1(d_1,\dots,d_n)\,,
$$
where the quantity $\varepsilon_1(\boldsymbol{d})$ is defined in~\eqref{eq:asymptotic:formula:for:the:volume}.
Thus, it is sufficient to prove Corollary~\ref{cor:1:and:3:imply:2} for partitions which do not contain entries $0$ (representing the marked points).

Choose any pair of indices $1\le i< j\le n$ such that $d_i,d_j\ge 1$.
Applying Theorem~1 from~\cite{Goujard:carea}, we
obtain the following expression for the contribution
$c_{area}(\mathcal{C}_{b,\mathrm{I}}(d_i,d_j))$
of the configuration $\mathcal{C}_{b,\mathrm{I}}(d_i,d_j)$
to $\carea(\cQ(\boldsymbol{d}))$:
\[c_{area}(\mathcal{C}_{b,\mathrm{I}}(d_i,d_j))=\frac{(d-3)!}{(d-1)!}\cdot 2d_id_j\cdot \frac{\Vol\cQ_{g-1}(d_1, \dots, d_i-2, \dots, d_j-2, \dots, d_n)}{\Vol\cQ_{g}(d_1, \dots, d_n)}\,,\]
where $d=\dim_{\C}\cQ_g(d_1, \dots, d_n)=2g+n-2$.
Note that the symbol $\boldsymbol{d}$ in bold denotes a partition in $\hat\Pi(3g-3+n)$, while the symbol $d$
denotes the complex dimension of the stratum $\cQ(\boldsymbol{d})$.

Assuming that $g\gg 1$ and using Conjecture~\ref{mc:vol} we obtain
\begin{equation}
\label{eq:config:I}
c_{area}(\mathcal{C}_{b,\mathrm{I}}(d_i,d_j))\sim \frac{(d_i+2)(d_j+2)}{2^3 d^2}
\,.
\end{equation}

Choose any index $i$ in $\{1,\dots,n\}$ such that $d_i\ge 3$ (if such index exists). By
Theorem~1 from~\cite{Goujard:carea},
for any pair of nonnegative integers $a_1, a_2$ satisfying $a_1+a_2=d_i-2$, the contribution
$c_{area}(\mathcal{C}_{b,\mathrm{II}}(a_1,a_2))$
of the configuration
$\mathcal{C}_{b,\mathrm{II}}(a_1,a_2)$
to $\carea(\cQ(\boldsymbol{d}))$
has the following form:
\[c_{area}(\mathcal{C}_{b,\mathrm{II}}(a_1,a_2))=\frac{(d-3)!}{(d-1)!}
\cdot 2(a_1+a_2)\cdot\frac{\Vol\cQ_{g-1}(d_1, \dots, d_i-4, \dots d_n)}{\Vol\cQ_g(d_1, \dots, d_n)}\,,\]
so, assuming that $g\gg 1$ and
using Conjecture~\ref{mc:vol}, we obtain the following contribution to
$\carea(\cQ(\boldsymbol{d}))$ of all pairs $a_1,a_2$
corresponding to the fixed $d_i$ as above:
\begin{multline}
\label{eq:config:II}
\frac{1}{2}\sum_{a_1=0}^{d_i-2}
c_{area}(\mathcal{C}_{b,\mathrm{II}}(a_1,a_2))
\frac{1}{2^4 d^2}(d_i-1)(d_i+2)
=\\=
\frac{1}{2^4 d^2}\big((d_i+2)^2-3(d_i+2)\big)
\,.
\end{multline}

We use notation $\cQ(d_1, \dots d_n)=\cQ(-1^{\mu_{-1}}, 1^{\mu_1}, 2^{\mu_2}, \dots)$ to record the multiplicities of entries $-1,1,2,\dots$ in the partition $\boldsymbol{d}$.

Equation~\eqref{eq:config:I} implies that
\begin{multline}
\label{eq:tmp1}
2^3 d^2
\sum_{\substack{1\le i< j\le n\\d_i,d_j\ge 1}}
c_{area}(\mathcal{C}_{b,\mathrm{I}}(d_i,d_j))
\sim
\sum_{1\le i< j\le n}(d_i+2)(d_j+2)
\ -\\-\ \mu_{-1}\sum_{i=1}^n (d_i+2)
+\frac{\mu_{-1}(\mu_{-1}+1)}{2}\,.
\end{multline}

Equation~\eqref{eq:config:II} implies that
\begin{multline}
\label{eq:tmp2}
2^4 d^2
\sum_{\substack{1\le i\le n\\d_i\ge 3}}\sum_{\substack{a_1+a_2=d_i-2\\a_1,a_2\ge 0}}
c_{area}(\mathcal{C}_{b,\mathrm{II}}(a_1,a_2))
\sim
\sum_{i=1}^n (d_i+2)^2 - 3\sum_{i=1}^n (d_i+2)
\ -\\-\ (\mu_{-1}+9\mu_1+16\mu_2)
+3(\mu_{-1}+3\mu_1+4\mu_2)
\,.
\end{multline}

Note that
\begin{equation}
\label{eq:tmp3}
\sum_{i=1}^n (d_i+2) =  \left(\sum_{i=1}^n d_i\right) + 2n = 4g-4+2n=2d\,,
\end{equation}
and, in particular, $\mu_{-1}+\mu_1+\mu_2+\dots=n<d$,
which implies that
\begin{equation}
\label{eq:tmp4}
\frac{2\mu_{-1}-6\mu_1-12\mu_2}{d^2}\to 0\quad\text{as }d\to+\infty\,.
\end{equation}
Note also, that by assumption the
number of simple poles in a stratum is
much smaller than the genus $g$. Since $g<d$ we get
$$
\frac{\mu_{-1}}{d}\to 0\quad\text{as }d\to+\infty\,.
$$

We conclude that for partitions $\boldsymbol{d}$
as in the statement of Corollary~\ref{cor:1:and:3:imply:2}
with $\mu_0=0$ we get the following asymptotics as $g\to+\infty$:
\begin{multline*}
c_{area}(\cQ(\boldsymbol{d}))
\sim
\sum_{\substack{1\le i< j\le n\\d_i,d_j\ge 1}}
c_{area}(\mathcal{C}_{b,\mathrm{I}}(d_i,d_j))
+
\sum_{\substack{1\le i\le n\\d_i\ge 3}}\sum_{\substack{a_1+a_2=d_i-2\\a_1,a_2\ge 0}}
c_{area}(\mathcal{C}_{b,\mathrm{II}}(a_1,a_2))
\ \sim\\ \sim\
\frac{1}{4}\frac{1}{(2d)^2}\left(
\left(\sum_{i=1}^n (d_i+2)\right)^2
-3\sum_{i=1}^n (d_i+2)\right)
 =
\frac{1}{4}\frac{1}{(2d)^2}\big( (2d)^2 - 3\cdot 2d\big)
\sim \frac{1}{4}\,,
\end{multline*}
where the first equivalence is the statement of Conjecture~\ref{conj:config} and the second equivalence
is the combination of~\eqref{eq:tmp1}--\eqref{eq:tmp4}.
\end{proof}

\begin{Remark}
Note that in this proof we used much weaker restriction on the growth rate
of the number of simple poles, namely it was sufficient to have $\mu_{-1}=o(g)$
as $g\to+\infty$.
\end{Remark}

\subsection*{Acknowledgements}
We thank D.~Chen, M.~M\"oller
and A.~Sauvaget for their formula and for their interest to the conjectural large genus asymptotics described above. We are very much indebted to M.~Kazarian for his very efficient recursive formula for the linear Hodge integrals providing a reliable test of our conjectures.


\end{document}